\newtheorem{theorem}{Theorem}[section]
\newtheorem{lemma}[theorem]{Lemma}
\newtheorem{proposition}[theorem]{Proposition}
\newtheorem{corollary}[theorem]{Corollary}
\theoremstyle{definition}
\newtheorem{definition}[theorem]{Definition}
\newtheorem{example}[theorem]{Example}
\DeclareMathOperator{\soc}{soc}
\DeclareMathOperator{\Ima}{Im}
\DeclareMathOperator{\Rea}{Re}
\numberwithin{equation}{section}
\begin{document}

	\title[]{Truncation and Spectral Variation in Banach Algebras}
	\author{C. Tour\'{e},  F. Schulz \and R. Brits}
	\address{Department of Mathematics, University of Johannesburg, South Africa}
	\email{cheickkader89@hotmail.com, francoiss@uj.ac.za, rbrits@uj.ac.za}
	\subjclass[2010]{15A60, 46H05, 46H10, 46H15, 47B10}
	\keywords{spectrum; truncation; spectral radius; subharmonic; $C^\star$-algebra}

\begin{abstract}
Let $a$ and $b$ be elements of a semisimple, complex and unital Banach algebra $A$. Using subharmonic methods, we show that if the spectral containment $\sigma(ax)\subseteq\sigma(bx)$ holds for all $x\in A$, then $ax$ belongs to the bicommutant of $bx$ for all $x\in A$. Given the aforementioned spectral containment, the strong commutation property then allows one to derive, for a variety of scenarios, a precise connection between $a$ and $b$.  The current paper gives another perspective on the implications of the above spectral containment which was also studied, not long ago, by J. Alaminos, M. Bre\v{s}ar \emph{et. al.}
\end{abstract}
	\parindent 0mm
	
	\maketitle	

\section{Introduction}

Problems related to spectral variation under the multiplicative and additive operations in Banach algebras have recently attracted attention of researchers working in the field of abstract spectral theory in Banach algebras. Specifically, the first contributions were made by Bre\v{s}ar and \v{S}penko \cite{specproperties}, and at around the same time, but independently by Braatvedt and Brits \cite{univari}, and then later by J. Alaminos \emph{et. al.} \cite{specproperties2}, and Brits and Schulz \cite{univarisoc}. The aim of this paper is to extend and elaborate on the results obtained in \cite{specproperties2} and \cite{univarisoc}; we shall employ techniques which are distinctly different from the methods used in \cite{specproperties2} and \cite{specproperties}.

Unless otherwise stated, $A$ will assumed to be a semisimple, complex, and unital Banach algebra with the unit denoted by $\mathbf 1$. The group of invertible elements, and the centre of $A$ are denoted respectively by $G(A)$ and $Z(A)$. We shall use $\sigma_A$ and $\rho_A$ to denote, respectively, the spectrum
$$\sigma_A(x):=\{\lambda\in\mathbb C:\lambda\mathbf 1-x\notin G(A)\},$$ and the spectral radius
$$\rho_A(x):=\sup\{|\lambda|:\lambda\in\sigma_A(x)\}$$ of an element $x\in A$ (and agree to omit the subscript if the underlying algebra is clear from the context). Denote further by $\sigma^\prime(x):=\sigma(x)\backslash\{0\}$ the \emph{non-zero spectrum} of $x\in A$.
If $X$ is a compact Hausdorff space, then $A=C(X)$ is the Banach algebra of continuous, complex functions on $X$ with the usual pointwise operations and the spectral radius as the norm. If $X$ is a complex Banach space then $A=\mathcal L(X)$ is the Banach algebra of bounded linear operators on $X$ to $X$ (also in the usual sense). The main question of this paper is, loosely stated, the following:\medskip

\emph{ Let $A$ be a semisimple, complex, and unital Banach algebra, and suppose that $a,b\in A$ satisfy
	\begin{equation}\label{contain}
	\sigma(ax)\subseteq\sigma(bx)\mbox{ for all }x\in A.
	\end{equation}
	What is the relationship between $a$ and $b$?}
\medskip

Observe, trivially, that
$$\eqref{contain}\Rightarrow\sigma^\prime(ax)\subseteq\sigma^\prime(bx)\mbox{ for all }x\in A.$$ 
Since the non-zero spectrum is cyclic (Jacobson's Lemma, \cite[Lemma 3.1.2]{aupetit1991primer}) it turns out to be  advantageous to assume, where applicable, the preceding implication of \eqref{contain} rather than \eqref{contain} itself. For easy reference we label
\begin{equation}\label{contain2}
\sigma^\prime(ax)\subseteq\sigma^\prime(bx)\mbox{ for all }x\in A,
\end{equation}
and then note that \eqref{contain2} is equivalent to the statement: $$\sigma^\prime(xa)\subseteq\sigma^\prime(xb)\mbox{ for all }x\in A.$$

Further, if \eqref{contain2} holds then we also have 
\begin{equation}\label{contain3}
\sigma^\prime((b-a)x)\subseteq\sigma^\prime(bx)\mbox{ for all }x\in A.
\end{equation}
To see this, if $\lambda\not=0$ and $\lambda\notin\sigma^\prime(bx)$, then 
$$1+bx(\lambda\mathbf1-bx)^{-1}=\lambda(\lambda\mathbf1-bx)^{-1}\in G(A),$$
from which the assumption \eqref{contain2} implies that $\mathbf1+ax(\lambda\mathbf1-bx)^{-1}\in G(A).$
Then $$\lambda\mathbf 1-(b-a)x=(\mathbf1+ax(\lambda\mathbf1-bx)^{-1})(\lambda\mathbf 1-bx)\in G(A).$$

We give a short list of some of the major known results which are related to \eqref{contain} and \eqref{contain2}:

\begin{itemize}
	\item[(a)]{ \cite[Theorem 3.7]{specproperties}: Let $A$ be a prime $C^\star$-algebra and let $a,b\in A$ be such that $\rho(ax)\leq\rho(bx)$ for all $x\in A$. Then there exists $\lambda\in\mathbb C$ such that $|\lambda|\leq1$ and $a=\lambda b$. }	
	\item[(b)]{\cite[Theorem 2.6]{univari}: If $A$ is an arbitrary semisimple, complex and unital Banach algebra, and $a,b\in A$, then $a=b$ if and only if $\sigma(ax)=\sigma(bx)$ for all $x\in A$ satisfying $\rho(x-\mathbf 1)<1$ (the bound on the spectral radius is sharp).  }
	\item[(c)]{ \cite[Theorem 2.3]{specproperties2}: If $A$ is a unital $C^\star$-algebra and $a,b\in A$, then $\sigma(ax)\subseteq\sigma(bx)\cup\{0\}$ for every $x\in A$ if and only if there exists a central projection $z\in A^{\prime\prime}$, the second dual of $A$, such that $a=zb$. }	
	\item[(d)]{ \cite[Theorem 3.6]{specproperties2}: If $A$ is a unital $C^\star$-algebra and $a,b\in A$, then $\rho(ax)\leq\rho(bx)$ for every $x\in A$ if and only if there exists a central projection $z\in A^{\prime\prime}$, the second dual of $A$, such that $a=zb$ and $\|z\|\leq1$. }	
	\item[(e)]{\cite[Theorem 3.9]{univarisoc}: If $A$ is a semisimple, complex and unital Banach algebra with non-zero socle, denoted $\soc(A)$, then $A$ is prime if and only if for $a,b\in A$ the following are equivalent:
		\begin{itemize}
			\item[(i)]{$\rho(ax)\leq\rho(bx)$ for all $x\in A$.}
			\item[(ii)]{$a=\lambda b$ for some $\lambda\in\mathbb C$ with $|\lambda|\leq1$. }
		\end{itemize} In particular, if $A=\mathcal L(X)$, then (i) and (ii) are equivalent.  }		
\end{itemize}

The following simple example serves as the impetus for this paper, and may perhaps indicate a general relationship between $a$ and $b$ when \eqref{contain} is satisfied:

\begin{example}\label{truncation}
	Let $A=C(X)$ where $X=[0,1]$. Define $b,a\in A$ by respectively 
	\begin{equation}
	b(t)=\left|t-1/2\right|,\ t\in X\mbox { and } a(t)=\left\{\begin{array}{cc} 0, & t\in[0,1/2)\\
	t-1/2, & t\in[1/2,1].\end{array}\right.
	\end{equation}
\end{example}

Then \eqref{contain} holds, and moreover, from the graphs of $a$ and $b$, it is easy that $a$ is a truncation of $b$. The obvious question is whether, for arbitrary Banach algebras, \eqref{contain} or \eqref{contain2}, implies that $a$ is, in some suitable sense, a ``truncation" of $b$? Example~\ref{truncation} suggests the following definition:

\begin{definition}[algebraic truncation]
	Let $A$ be a complex and unital Banach algebra, and let $a,b\in A$. Then $a$ is said to be an \emph{algebraic truncation} of $b$ if 
	$$a(b-a)=(b-a)a=0.$$
\end{definition}

\section{General results}
We start with a simple but interesting observation:
\begin{proposition}\label{reverse}
	If $a,b\in A$ satisfy $ax(b-a)=0$ for all $x\in A$, then  $$\sigma^\prime(ax)\subseteq\sigma^\prime(bx)\mbox{ for all }x\in A.$$
\end{proposition}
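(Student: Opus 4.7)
The goal is to show that, under the assumption $ax(b-a) = 0$ for every $x \in A$, whenever $\lambda \neq 0$ and $\lambda \mathbf{1} - bx$ is invertible, the element $\lambda \mathbf{1} - ax$ must also be invertible. My approach will be to write down an explicit candidate for the inverse of $\lambda \mathbf{1} - ax$ in terms of $u := (\lambda \mathbf{1} - bx)^{-1}$ and verify that it works on both sides.

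The natural guess, mimicking the resolvent manipulation used a few lines earlier in the excerpt to derive \eqref{contain3}, is
\[
(\lambda \mathbf{1} - ax)^{-1} \stackrel{?}{=} \tfrac{1}{\lambda}\bigl(\mathbf{1} + ax\,u\bigr).
\]
The first step is to extract from the hypothesis the identity $(ax)^2 = ax\cdot bx$: indeed, $ax(b-a) = 0$ gives $axb = axa$, and right-multiplying by $x$ yields $ax\cdot bx = ax\cdot ax$. With this in hand, computing $(\lambda \mathbf{1} - ax)\bigl(\mathbf{1} + ax\,u\bigr)$ and using $ax(\lambda \mathbf{1} - bx)u = ax$ collapses the expression to $\lambda \mathbf{1}$, establishing the right inverse.

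The more delicate step is the verification on the left, because $u$ does not commute with $ax$ in any obvious way. Here I would expand $\bigl(\mathbf{1} + ax\,u\bigr)(\lambda \mathbf{1} - ax)$ and rewrite the cross term using the resolvent identity $\lambda u - u\cdot ax = \mathbf{1} + u(b-a)x$. The residual term that needs to vanish is $ax\cdot u\cdot(b-a)x$, and this is precisely where the hypothesis is used in its full strength: since the assumption holds for every $x \in A$, we may substitute $xu$ for $x$ to obtain $a(xu)(b-a) = 0$, hence $axu(b-a)x = 0$. After this cancellation, the left product also reduces to $\lambda \mathbf{1}$.

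The main obstacle, therefore, is not a computational one but rather the recognition that the hypothesis must be applied with the element $xu$ rather than just $x$; once this is spotted the two verifications are symmetric. Combining both checks shows $\lambda \mathbf{1} - ax \in G(A)$ whenever $\lambda \neq 0$ and $\lambda \mathbf{1} - bx \in G(A)$, which is exactly the contrapositive of the desired non-zero spectral containment $\sigma'(ax) \subseteq \sigma'(bx)$.
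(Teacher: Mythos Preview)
Your argument is correct, but it is genuinely different from the paper's. The paper first shows, using semisimplicity, that the one-sided orthogonality $ax(b-a)=0$ for all $x$ forces the symmetric condition $(b-a)xa=0$ for all $x$: if $(b-a)x_0a\neq0$ for some $x_0$, semisimplicity provides $y$ with $\sigma((b-a)x_0ay)\neq\{0\}$, yet $((b-a)x_0ay)^2=0$, a contradiction. With two-sided orthogonality in hand, the decomposition $bx=ax+(b-a)x$ immediately gives $\sigma'(bx)=\sigma'(ax)\cup\sigma'((b-a)x)$, from which the containment follows. Your route instead writes down the explicit two-sided inverse $\tfrac{1}{\lambda}(\mathbf{1}+ax\,u)$ of $\lambda\mathbf{1}-ax$ and verifies it directly, the key move being to apply the hypothesis with $xu$ in place of $x$ to kill the residual term $axu(b-a)x$. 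Your approach is more elementary in that it never invokes semisimplicity of $A$ (so it holds in any complex unital Banach algebra), whereas the paper's argument buys additional information: it yields the full spectral decomposition $\sigma'(bx)=\sigma'(ax)\cup\sigma'((b-a)x)$ and the symmetric orthogonality $(b-a)xa=0$, both of which are of independent interest.
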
 
\begin{proof}
	We shall first prove that $ax(b-a)=0$ for all $x\in A$ implies that
	$(b-a)xa=0$ for all $x\in A$. With the hypothesis, suppose that 
	$(b-a)x_0a\not=0$ for some $x_0\in A$. Since $A$ is semisimple we can find $y\in A$ such that $\sigma((b-a)x_0ay)\not=\{0\}$. But this gives a contradiction because  $((b-a)x_0ay)^2=0$. Towards the spectral containment: If we write $ax+(b-a)x=bx$ then  the preceding calculation implies that 
	$$\sigma^\prime(bx)=\sigma^\prime(ax)\cup \sigma^\prime\left((b-a)x\right),$$ which proves the claim.

\end{proof}		

In light of Proposition~\ref{reverse} the main question can therefore be phrased as whether the condition  $ax(b-a)=0$ for all $x\in A$ is the only possible instance which fulfils the spectral containment \eqref{contain2}. Notice further that the condition  $ax(b-a)=0$ for all $x\in A$ is equivalent to the condition $ax(b-a)x=0$ for all $x\in A$; in other words, to the condition that $ax$ is an algebraic truncation of $bx$ for all $x\in A$. 

Our first main result shows that \eqref{contain2} forces strong commutation properties. The proof is an application of Vesentini's Theorem \cite[Theorem 3.4.7]{aupetit1991primer}:

\begin{theorem}\label{commute}
	If  $\sigma^\prime(ax)\subseteq\sigma^\prime(bx)\mbox{ for all }x\in A$ then $a$ belongs to the bicommutant of $b$. Hence, for each $x\in A$, $ax$ belongs to the bicommutant of $bx$.
\end{theorem}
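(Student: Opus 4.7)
The strategy combines the classical exponential-conjugation trick with Vesentini's subharmonicity theorem. Fix $c \in A$ with $cb = bc$; I aim to show $ca = ac$. Define the entire $A$-valued map
$$f(\lambda) := e^{-\lambda c} a e^{\lambda c}, \qquad \lambda \in \mathbb{C},$$
so that $f(0) = a$ and $f'(0) = ac - ca$; it therefore suffices to prove that $f$ is constant on $\mathbb{C}$.

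Using $bc = cb$ (which gives $b e^{\lambda c} = e^{\lambda c} b$), the cyclicity of the non-zero spectrum, and the hypothesis applied to $x := e^{\lambda c} y e^{-\lambda c}$, one obtains, for every $y \in A$ and every $\lambda \in \mathbb{C}$,
$$\sigma^\prime(f(\lambda) y) = \sigma^\prime(a e^{\lambda c} y e^{-\lambda c}) \subseteq \sigma^\prime(b e^{\lambda c} y e^{-\lambda c}) = \sigma^\prime(by).$$
In particular $\rho(f(\lambda) y) \leq \rho(by)$ for every $\lambda$. By Vesentini's theorem the map $\lambda \mapsto \rho(f(\lambda) y)$ is subharmonic on $\mathbb{C}$, and being bounded above it must be constant.

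The hardest step, which I expect to be the main obstacle, is to upgrade this constancy of spectral radii into constancy of $f$ itself --- a naive norm-Liouville is unavailable. The intended route is to establish that $(f(\lambda) - a) y$ is quasinilpotent for every $\lambda$ and every $y$, whereupon semisimplicity of $A$ (i.e.\ $\rad(A) = 0$) forces $f(\lambda) = a$. For this I would fix $\mu \notin \sigma(by) \cup \{0\}$, note that the resolvent $R(\lambda) := (\mu \mathbf{1} - f(\lambda) y)^{-1}$ is entire in $\lambda$ with spectrum trapped in the fixed compact set $\{(\mu - \nu)^{-1} : \nu \in \sigma(by) \cup \{0\}\}$, and exploit the identity
$$R(\lambda) - (\mu \mathbf{1} - ay)^{-1} = R(\lambda)(f(\lambda) - a) y (\mu \mathbf{1} - ay)^{-1}$$
together with a second subharmonic/Liouville argument to pin down $\rho((f(\lambda) - a) y) = 0$.

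Once $f \equiv a$ is established, differentiating at $\lambda = 0$ yields $ac = ca$; as $c$ was arbitrary in the commutant of $b$, $a$ belongs to the bicommutant of $b$. The final assertion of the theorem is then immediate: for any fixed $x \in A$, the pair $(ax, bx)$ inherits the hypothesis, because for every $z \in A$
$$\sigma^\prime((ax) z) = \sigma^\prime(a(xz)) \subseteq \sigma^\prime(b(xz)) = \sigma^\prime((bx) z),$$
and applying the first part to $(ax, bx)$ gives that $ax$ lies in the bicommutant of $bx$.
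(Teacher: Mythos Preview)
Your overall architecture coincides with the paper's: fix $c$ commuting with $b$, set $f(\lambda)=e^{-\lambda c}ae^{\lambda c}$, use cyclicity of $\sigma'$ together with $bc=cb$ to obtain $\sigma'(f(\lambda)y)\subseteq\sigma'(by)$ for every $y$, and then try to pass from this to $\rho\bigl((f(\lambda)-a)y\bigr)=0$ via Vesentini--Liouville, finally invoking semisimplicity. The deduction of the ``Hence'' clause from the first assertion is also exactly as in the paper.

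The gap is precisely where you flag it. You reach the point where $\rho(f(\lambda)y)$ is bounded (hence constant) and then hope that a resolvent identity
\[
(\mu\mathbf 1-f(\lambda)y)^{-1}-(\mu\mathbf 1-ay)^{-1}=(\mu\mathbf 1-f(\lambda)y)^{-1}(f(\lambda)-a)y\,(\mu\mathbf 1-ay)^{-1}
\]
plus ``a second subharmonic/Liouville argument'' will yield $\rho\bigl((f(\lambda)-a)y\bigr)=0$. As stated this does not go through: the entire function $\lambda\mapsto(\mu\mathbf 1-f(\lambda)y)^{-1}$ has uniformly bounded \emph{spectrum}, but neither its norm nor the spectral radius of its difference with the value at $0$ is a priori bounded, so there is nothing to which Liouville applies. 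Constancy of $\rho(f(\lambda)y)$ alone (or even of the full spectrum of $f(\lambda)y$) does not control $\rho\bigl((f(\lambda)-a)y\bigr)$.

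What the paper does at this exact spot is short and worth absorbing: for $|\mu|>\rho(by)$ it shows \emph{directly} that $\mu\mathbf 1-\bigl(ay-f(\lambda)y\bigr)\in G(A)$, by two successive uses of the spectral containment combined with elementary factorizations. First, from $\mathbf 1+by(\mu\mathbf 1-by)^{-1}\in G(A)$ one gets $\mathbf 1+f(\lambda)y(\mu\mathbf 1-by)^{-1}\in G(A)$, hence $\mu\mathbf 1-\bigl(by-f(\lambda)y\bigr)\in G(A)$. Next, since $\mu\mathbf 1+f(\lambda)y\in G(A)$, one refactors the latter as $(\mu\mathbf 1+f(\lambda)y)\bigl(\mathbf 1-(\mu\mathbf 1+f(\lambda)y)^{-1}by\bigr)$, applies the containment once more (now at $\alpha=0$) to replace $b$ by $a$ in the second factor, and multiplies back to obtain $\mu\mathbf 1-\bigl(ay-f(\lambda)y\bigr)\in G(A)$. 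This gives the uniform bound $\rho\bigl((a-f(\lambda))y\bigr)\le\rho(by)$, after which your intended Liouville step (value $0$ at $\lambda=0$) and the division-by-$\lambda$ trick finish the proof. Plugging this factorization argument in for your unspecified ``second subharmonic/Liouville'' step makes your proof complete and essentially identical to the paper's.
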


\begin{proof}
	Pick $\alpha\in\mathbb C$ arbitrary but fixed, and suppose $c\in A$ commutes with $b$.  By assumption, we have that
	$$\sigma^\prime(ae^{\alpha c}xe^{-\alpha c})\subseteq\sigma^\prime(be^{\alpha c}xe^{-\alpha c})\mbox{ for all }x\in A.$$
	Jacobson's Lemma, together with the fact that $b$ and $c$ commute, imply that  	
	$$\sigma^\prime(e^{-\alpha c}ae^{\alpha c}x)\subseteq\sigma^\prime(bx)\mbox{ for all }x\in A.\eqno(1)$$
	Now fix $x$ and take $\lambda\in\mathbb C$ with $\rho(bx)<|\lambda|$. Then
	$\mathbf1+bx(\lambda\mathbf 1-bx)^{-1}\in G(A)$ from which (1) implies that 
	$\mathbf1+e^{-\alpha c}ae^{\alpha c}x(\lambda-bx)^{-1}\in G(A)$ and so we have
	$\mathbf1+(\lambda-bx)^{-1}e^{-\alpha c}ae^{\alpha c}x\in G(A)$. Multiplication by $\lambda\mathbf1-bx$ on the left then implies that
	$\lambda\mathbf1-(bx-e^{-\alpha c}ae^{\alpha c}x)\in G(A)$. Since $\rho(bx)<|\lambda|$ we observe that (1) implies $\lambda\mathbf1+e^{-\alpha c}ae^{\alpha c}x\in G(A)$. Arguing as before, factorizing
	$$(\lambda\mathbf1+e^{-\alpha c}ae^{\alpha c}x)(\mathbf1-(\lambda\mathbf1+e^{-\alpha c}ae^{\alpha c}x)^{-1}bx),$$ followed by multiplication with $(\lambda\mathbf1+e^{-\alpha c}ae^{\alpha c}x)^{-1}$ on the left, we have that $\mathbf1-(\lambda\mathbf1+e^{-\alpha c}ae^{\alpha c}x)^{-1}bx\in G(A)$ whence $\mathbf1-(\lambda\mathbf1+e^{-\alpha c}ae^{\alpha c}x)^{-1}ax\in G(A)$.
	Then 
	\begin{align*}
	(\lambda\mathbf1+e^{-\alpha c}ae^{\alpha c}x)[\mathbf1-(\lambda\mathbf1+e^{-\alpha c}ae^{\alpha c}x)^{-1}ax]&=\lambda\mathbf1-[ax-e^{-\alpha c}ae^{\alpha c}x]\in G(A).
	\end{align*}
	From this it follows that, for each $\alpha\in\mathbb C$,
	$$\rho(ax-e^{-\alpha c}ae^{\alpha c}x)\leq\rho(bx).$$
	The subharmonic function 
	$$\alpha\mapsto\rho(ax-e^{-\alpha c}ae^{\alpha c}x)$$ is therefore bounded on $\mathbb C$, and, by Liouville's Theorem, it must be constant. In particular, with $\alpha=0$, we see that it vanishes everywhere on $\mathbb C$. Define $f:\mathbb C\rightarrow A$ by 
	\begin{displaymath}
	f(\alpha)=\left\{\begin{array}{cc} [ax-e^{-\alpha c}ae^{\alpha c}x]/\alpha  & \alpha\not=0\\
	(ca-ac)x & \alpha=0.\end{array}\right.
	\end{displaymath}
	Then $f$ is analytic on $\mathbb C$ and $\rho(f(\alpha))=0$ holds for all $\alpha\not=0$. But this means that $\rho(f(0))=0$. Since $x$ was arbitrary, and $A$ is semisimple, we have that $ca-ac=0$ as required.
\end{proof}	

\begin{corollary}\label{cyc}
	If  $\sigma^\prime(ax)\subseteq\sigma^\prime(bx)\mbox{ for all }x\in A$ then $axb=bxa$ for all $x\in A$.
\end{corollary}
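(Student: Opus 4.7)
The plan is to deduce the identity $axb=bxa$ by combining Theorem~\ref{commute} with a one-parameter perturbation argument. The theorem supplies two pieces of information I would lean on: first, that $a$ commutes with $b$ itself (because $a\in\{b\}''$ and $b\in\{b\}'$); and second, and more crucially, that for \emph{every} $x\in A$ the element $ax$ lies in the bicommutant of $bx$, so in particular $ax$ commutes with $bx$, giving $(ax)(bx)=(bx)(ax)$, i.e.\ $axbx=bxax$.

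Starting from this identity, valid for every $x\in A$, I would polarize it by substituting $x+\lambda\mathbf{1}$ in place of $x$, where $\lambda\in\mathbb{C}$ is a free scalar. Both sides are then polynomials in $\lambda$ with coefficients in $A$, and equating coefficients yields three relations: the $\lambda^0$ term reproduces $axbx=bxax$; the $\lambda^2$ term reproduces $ab=ba$; and the $\lambda^1$ term gives
$$axb+abx=bxa+bax.$$
Rearranging this last relation yields $axb-bxa=(ba-ab)x$, whose right-hand side vanishes by the commutation of $a$ and $b$ already noted.

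I do not anticipate any substantive obstacle. The only point that requires care is invoking the \emph{full} conclusion of Theorem~\ref{commute} — namely that $ax\in\{bx\}''$ for every $x$, not merely that $a\in\{b\}''$ — since the weaker consequence $ab=ba$ by itself is visibly insufficient to yield $axb=bxa$. Once the stronger statement is in play, the proof reduces to a routine separation of a polynomial identity in $\lambda$ into coefficient identities, and no additional analytic input (subharmonicity, Liouville, or otherwise) is required beyond what has already been used in the theorem.
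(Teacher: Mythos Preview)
Your proof is correct and follows essentially the same route as the paper's: both start from $aybxy=byay$ (i.e.\ $ay\cdot by=by\cdot ay$) supplied by Theorem~\ref{commute}, apply the scalar shift $y=\lambda\mathbf{1}+x$ (the paper uses $\lambda\mathbf{1}-x$), and then use $ab=ba$ to kill the remaining linear term. The only cosmetic difference is that the paper picks $\lambda$ so that $\lambda\mathbf{1}-x$ is invertible and cancels that factor on the right to obtain $a(\lambda\mathbf{1}-x)b=b(\lambda\mathbf{1}-x)a$ directly, whereas you compare polynomial coefficients; the content is the same.
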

\begin{proof}
	Theorem~\ref{commute} says that for each $x\in A$, $axbx=bxax$. Now, given $x\in A$, pick $\lambda\in\mathbb C$ such that $\lambda\mathbf 1-x$ is invertible. Then obviously $a(\lambda\mathbf 1-x)b=b(\lambda\mathbf 1-x)a$. So $axb=bxa$ follows from $ab=ba$. 	
\end{proof}	

To obtain one of our main results in this section, Theorem~\ref{group}, we shall need two lemmas. The first is somewhat folklore, but very well-known, and appears scattered throughout the literature on Banach algebras; the second lemma is, as far as the authors could establish, originally due to Ptak \cite{ptakderivations} and has since been ``rediscovered", and applied, in a number of papers related to Banach algebra theory. 

\begin{lemma}\label{folklore}
	If $A$ is a semisimple, complex and unital Banach algebra, and $p\in A$ is a projection, then $pAp$ is a semisimple Banach algebra with identity element $p$. Moreover
	$$\sigma^\prime_{pAp}(z)=\sigma^\prime_{A}(z)\mbox{ holds for each }z\in pAp.$$
\end{lemma}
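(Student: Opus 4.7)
The plan is to establish the three assertions in order: first that $pAp$ is a Banach algebra with identity $p$, then the spectral identity for non-zero spectra, and finally to deduce semisimplicity from the spectral identity together with a Jacobson's lemma argument.

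For the algebra structure, I would observe that if $z_n=pa_np\to z$ in $A$, then passing to the limit in $z_n=pz_np$ yields $z=pzp$, so $pAp$ is closed in $A$; it is clearly a subalgebra, and for $z=pap\in pAp$ one has $pz=zp=z$, so $p$ is an identity.

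For the spectral identity, fix $\lambda\neq 0$ and $z\in pAp$ and use the (well-known) device of ``gluing'' an inverse across the decomposition $\mathbf1=p+(\mathbf1-p)$. If $\lambda p-z$ has inverse $u\in pAp$, I would verify by direct computation that $(\mathbf1-p)/\lambda+u$ inverts $\lambda\mathbf1-z$ in $A$, using $z(\mathbf1-p)=(\mathbf1-p)z=0$ (which follows from $z=pzp$). Conversely, if $w=(\lambda\mathbf1-z)^{-1}\in A$, I would multiply the identity $(\lambda\mathbf1-z)w=\mathbf1$ on both sides by $p$ and use $p(\lambda\mathbf1-z)p=\lambda p-z$ together with $(\lambda p-z)(\mathbf1-p)=0$ to conclude that $pwp$ inverts $\lambda p-z$ inside $pAp$. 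This establishes $\sigma'_{pAp}(z)=\sigma'_A(z)$.

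For semisimplicity, I would use the characterization that $A$ (resp. $pAp$) has zero Jacobson radical iff $\rho_A(za)=0$ for all $a$ in the algebra forces $z=0$. Suppose $z\in pAp$ lies in $\mathrm{Rad}(pAp)$, i.e.\ $\rho_{pAp}(zu)=0$ for every $u\in pAp$. Given arbitrary $a\in A$, Jacobson's Lemma gives $\sigma'_A(za)=\sigma'_A(paz)$, and since $paz=(pap)z\in pAp$, the spectral identity just proved yields
\begin{equation*}
\sigma'_A(za)=\sigma'_A(paz)=\sigma'_{pAp}((pap)z).
\end{equation*}
Applying Jacobson's Lemma once more inside $pAp$, $\sigma'_{pAp}((pap)z)=\sigma'_{pAp}(z(pap))=\emptyset$ by hypothesis. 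Hence $\rho_A(za)=0$ for every $a\in A$, so $z\in\mathrm{Rad}(A)=\{0\}$, and $pAp$ is semisimple.

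The only mildly technical step is the spectral identity; but since $p$ is a central idempotent in the two-sided ``corner'' sense $z=pzp$, the inverse can be written down explicitly, so there is no real obstacle, only bookkeeping. The rest of the argument is a standard application of Jacobson's Lemma together with semisimplicity of $A$.
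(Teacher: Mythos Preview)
Your argument is correct. The paper does not actually give a proof of this lemma: it is stated as ``folklore'' and ``very well-known'' and simply cited without argument, so there is nothing to compare your approach against. Your proof is the standard one---the explicit inverse $(\mathbf 1-p)/\lambda+u$ for the spectral identity, and the radical characterization via quasinilpotence together with Jacobson's Lemma for semisimplicity---and all the computations check out. One tiny remark: the step $\sigma'_A(za)=\sigma'_A(paz)$ is really two applications of Jacobson's Lemma (first $\sigma'(za)=\sigma'(az)$, then $az=(az)p$ gives $\sigma'(azp)=\sigma'(paz)$), which you might make explicit; but this is cosmetic, not a gap.
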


\begin{lemma}[Ptak]\label{center}
	If $A$ is a semisimple, complex and unital Banach algebra, and $z\in A$ satisfies $\rho(zx)\leq \rho(x)$ for all $x \in A$, then $z \in Z(A)$. 
\end{lemma}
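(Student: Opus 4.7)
The plan is to adapt Ptak's classical argument, using Vesentini's subharmonicity theorem, Jacobson's lemma on the cyclicity of the non-zero spectrum, and Liouville's theorem for subharmonic functions. Fix $y\in A$ arbitrary; the goal is to show $[y,z]=0$, from which $z\in Z(A)$ follows.

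First, I would note that for each $\alpha\in\mathbb{C}$ and each $w\in A$,
$$\rho\!\left(\bigl(e^{-\alpha y}ze^{\alpha y}\bigr)w\right)=\rho\!\left(z\,e^{\alpha y}we^{-\alpha y}\right)\leq \rho\!\left(e^{\alpha y}we^{-\alpha y}\right)=\rho(w),$$
the first equality by Jacobson's lemma and the inequality by the hypothesis. Hence the similar element $z_\alpha:=e^{-\alpha y}ze^{\alpha y}$ inherits the spectral-radius hypothesis from $z$. Now fix $w\in A$: the entire $A$-valued map $\alpha\mapsto z_\alpha w$ makes $\alpha\mapsto\rho(z_\alpha w)$ subharmonic on $\mathbb{C}$ (by Vesentini) and bounded above by $\rho(w)$; Liouville's theorem for subharmonic functions then forces it to be constant, so $\rho(z_\alpha w)=\rho(zw)$ for every $\alpha\in\mathbb{C}$.

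The next task is to upgrade this equality of spectral radii to the conclusion $z_\alpha w = zw$ for every $\alpha,w$. To this end, introduce the entire $A$-valued function $h_w(\alpha):=(z_\alpha-z)w/\alpha$ (extended across the removable singularity at $\alpha=0$ by $h_w(0)=(zy-yz)w$). If one can establish that $\rho(h_w(\alpha))=0$ for all $\alpha\in\mathbb{C}$ and $w\in A$, then specializing to $\alpha=0$ yields $\rho([z,y]w)=0$ for all $w\in A$, and the semisimplicity of $A$ forces $[z,y]=0$, finishing the proof since $y$ was arbitrary.

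The main obstacle is precisely in establishing the vanishing of $\rho(h_w(\alpha))$: the spectral radius is not subadditive, so $\rho((z_\alpha-z)w)$ cannot be bounded directly from the individual bounds on $\rho(z_\alpha w)$ and $\rho(zw)$. Ptak's resolution is a finer subharmonic estimate combined with the observation that every $z_\alpha$ itself satisfies the same spectral-radius hypothesis, permitting iteration of the construction, and ultimately an application of Aupetit-type scarcity results for analytic multifunctions to conclude that the two analytic families $\alpha\mapsto\sigma(z_\alpha w)$ and $\sigma(zw)$ are forced to coincide. Since the lemma is well established in the Banach algebra literature, my plan is to cite \cite{ptakderivations} for this delicate final step rather than reconstruct it.
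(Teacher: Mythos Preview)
The paper does not supply its own proof of this lemma; it is stated with attribution to Pt\'ak \cite{ptakderivations} and then used as a black box. There is therefore nothing within the paper to compare your argument against.

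That said, your proposal is entirely in the spirit of the subharmonic machinery the paper deploys in Theorem~\ref{commute}, and the preliminary steps---showing that $z_\alpha:=e^{-\alpha y}ze^{\alpha y}$ inherits the hypothesis via Jacobson's lemma, and then applying Vesentini plus Liouville to force $\alpha\mapsto\rho(z_\alpha w)$ constant---are correct. The substantive content lies, as you yourself acknowledge, in passing from this to $\rho((z_\alpha-z)w)=0$, and there your plan is simply to cite Pt\'ak. That is precisely what the paper does for the lemma as a whole. So in effect your proposal and the paper both defer to the same external reference; you have merely added some correct scaffolding in front of the citation rather than giving a self-contained argument. One small caution: Pt\'ak's original paper proves the full statement directly and does not necessarily isolate your ``delicate final step'' as a stand-alone intermediate result, so citing \cite{ptakderivations} for that specific step (as opposed to the lemma outright) may be mislabelled; it would be cleaner either to cite the lemma wholesale, as the paper does, or to actually carry out the missing estimate.
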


\begin{theorem}\label{group}
	Suppose, for some $a,b\in A$, that $\sigma^\prime(ax)\subseteq\sigma^\prime(bx)$ for all $x\in A$.
	\begin{itemize}
		\item[(a)]{ If $a$ is invertible then $a=b$.}	
		\item[(b)]{ If $b$ is a projection then $ab=ba=a$ and $a$ is a projection. In particular if $b=\mathbf 1$, then $a$ is a projection belonging to $Z(A)$.}	
		\item[(c)]{ If $b$ is invertible then $a$ is group invertible. In particular, there exists a projection $p\in Z(A)$ such that $a=bp$.}
		\item[(d)]{ If $a$ is a projection then $ab=ba=a$.}	
	\end{itemize}
\end{theorem}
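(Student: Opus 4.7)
The plan is to establish the four parts in the order (b), (c), (a), (d), each reducing to the previous. The essential tools will be Corollary~\ref{cyc} (hence $ab=ba$), the corner-algebra passage of Lemma~\ref{folklore}, Ptak's Lemma~\ref{center}, the auxiliary containment \eqref{contain3}, and the fact that in a semisimple $A$ any element $c$ with $\rho(cx)=0$ for every $x$ vanishes.

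For (b) I start from $b(\mathbf 1-b)=0$, so the hypothesis applied to $(\mathbf 1-b)x$ gives $\sigma'(a(\mathbf 1-b)x)\subseteq\sigma'(0)=\emptyset$ for every $x\in A$. Semisimplicity forces $a(\mathbf 1-b)=0$, hence $a=ab=ba$ (using $ab=ba$) and therefore $a\in bAb$. To see $a$ is a projection, I restrict the hypothesis to $bAb$, where $b$ is the identity: for $x\in bAb$ the hypothesis reads $\sigma'_{bAb}(ax)\subseteq\sigma'_{bAb}(x)$. Lemma~\ref{center} places $a$ in $Z(bAb)$, and combining $\sigma_{bAb}(a)\subseteq\{0,1\}$ with polynomial spectral mapping exhibits $a^2-a$ as a central quasi-nilpotent in the semisimple $bAb$, hence zero. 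When $b=\mathbf 1$ this puts $a$ in $Z(A)$.

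For (c), setting $d:=ab^{-1}=b^{-1}a$ (invoking $ab=ba$) and substituting $x=b^{-1}y$ into the hypothesis gives $\sigma'(dy)\subseteq\sigma'(y)$ for every $y\in A$, i.e.\ the case $b=\mathbf 1$ of (b). Hence $d$ is a central projection, $a=bd$, and the element $a^{\#}:=b^{-1}d$ is readily verified to be the group inverse of $a$.

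For (a), I will first deduce that $b$ is invertible, then invoke (c). Substituting $x=a^{-1}$ in \eqref{contain3} gives $\sigma'(ba^{-1}-\mathbf 1)\subseteq\sigma'(ba^{-1})$, which translates to: for every $\lambda\in\sigma(ba^{-1})$ with $\lambda\neq 1$, also $\lambda-1\in\sigma(ba^{-1})$. Since $\sigma(ba^{-1})$ is compact, the shift orbit of every such $\lambda$ must terminate at $1$, whence $\sigma(ba^{-1})\subseteq\{1,2,3,\ldots\}$ and in particular $b$ is invertible. Part (c) then yields $a=bp$ with $p$ a central projection, and invertibility of $a$ forces the projection $p$ to be invertible, so $p=\mathbf 1$ and $a=b$.

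For (d), I reduce to $aAa$. Using $a^2=a$ and $ab=ba$, a short computation shows $bx\in aAa$ and $bx=(ab)x$ for every $x\in aAa$, while $ab\in aAa$ as well. The hypothesis thus restricts to $\sigma'_{aAa}(x)\subseteq\sigma'_{aAa}((ab)x)$, and $aAa$ being a semisimple Banach algebra with identity $a$ (Lemma~\ref{folklore}), part (a) applied inside $aAa$, with $\mathbf 1_{aAa}=a$ playing the role of the invertible element, yields $ab=a$; combined with $ab=ba$, also $ba=a$.

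The main obstacle, as I see it, is the compactness-plus-shift step in (a): one must ensure that the boundedness of $\sigma(ba^{-1})$ really does force every orbit under $\lambda\mapsto\lambda-1$ to reach $1$ rather than escape to infinity. Everything else amounts to bookkeeping with the radical characterisation, passage to corner algebras, and the outputs of Corollary~\ref{cyc}, Lemma~\ref{folklore} and Lemma~\ref{center}.
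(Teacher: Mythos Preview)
Your proof is correct and uses the same essential ingredients as the paper: the commutation $ab=ba$ from Corollary~\ref{cyc}, passage to corner algebras via Lemma~\ref{folklore}, Ptak's Lemma~\ref{center}, the radical characterisation in a semisimple algebra, and a shift--plus--compactness argument for the spectrum. The paper, however, proves the parts in the order (a), (b), (c), (d) and makes (a) self-contained: it first reduces to $a=\mathbf 1$, notes that $b\in Z(A)$ by Corollary~\ref{cyc}, and then splits into the cases $b\in G(A)$ (where $\sigma(b^{-1})=\{1\}$ finishes things) and $b\notin G(A)$ (where centrality of $b$ gives $\sigma((b-\mathbf 1)x)\subseteq\sigma(bx)$, and a downward shift on $\sigma(b)$ contradicts compactness). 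Your reordering (b)$\to$(c)$\to$(a)$\to$(d) avoids that case split: applying \eqref{contain3} with $x=a^{-1}$ and running the shift on $\sigma(ba^{-1})$ forces $b\in G(A)$ directly, after which (c) closes out (a). This is a little sleeker, at the price of making (a) depend on (b) and (c). Your arguments for (b), (c), and (d) are essentially identical to the paper's.
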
	
\begin{proof}
	(a) We shall first prove that if $a=\mathbf 1$ then $b=\mathbf 1$. 	Observe that, by Corollary~\ref{cyc}, $b\in Z(A)$. To obtain the preliminary result we consider two cases:
	
	(i){ $b\in G(A)$: Obviously $b^{-1}\in Z(A)$ and, moreover, we have that $\sigma(b^{-1})\subseteq \sigma(bb^{-1})=\{1\}$ implies that $\sigma(b^{-1})=\{1\}$. Then $\rho((b^{-1}-\mathbf1)x)\leq \rho(b^{-1}-\mathbf1)\rho(x)=0$ implies (by semisimplicity) that $b^{-1}-\mathbf 1 =0$. Thus $b=\mathbf 1$.}\smallskip

	(ii){ $b\notin G(A)$: Pick $\lambda\in\mathbb C$ such that $\lambda \mathbf{1}-bx,\,\lambda\mathbf{1} \in G(A)$. Then
		$$(\lambda \mathbf{1}-bx)(1+(\lambda\mathbf{1}-bx )^{-1}bx)=\lambda \mathbf{1}\in G(A)$$
		from which it follows that $ 1+( \lambda \mathbf{1}-bx )^{-1}bx \in G(A)$ and hence that $1+( \lambda \mathbf{1}-bx )^{-1}x\in G(A)$ (using \eqref{contain2} together with the fact that $(\lambda \mathbf{1}-bx )^{-1}$ commutes with $x$). Thus
		$$\lambda \mathbf{1}-(b-\mathbf1)x=(\lambda \mathbf{1}-bx )(1+(\lambda \mathbf{1}-bx )^{-1}x) \in G(A)$$  and so $\sigma{'}((b-\mathbf1)x) \subseteq \sigma{'}(bx) $. Since $b\in Z(A)$ we have that $bx$ is not invertible for any $x \in A$ and we infer that $\sigma((b-\mathbf1)x) \subseteq \sigma(bx) $. Taking $x=\mathbf 1$ we deduce $\sigma(b-\mathbf1) \subseteq \sigma(b)$. But since $0 \in \sigma(b)$ the implication of this  would (inductively) be that all negative integers belong to $\sigma(b)$ contradicting the compactness of the spectrum. Thus, if $a=\mathbf 1$ then $b=\mathbf 1$.} 
	To complete the proof of (a) notice that if $a\in G(A)$ then $\eqref{contain2}$ implies that $\sigma^\prime(x)\subseteq\sigma^\prime(ba^{-1}x)$ holds for all $x\in A$. So by the preceding paragraph $\mathbf 1=ba^{-1}$ and the result follows.
	
	(b) From the hypothesis we deduce that $\sigma^\prime(abbxb)\subseteq\sigma^\prime(bbxb)$, and hence that	$\sigma^\prime(abbxb)\subseteq\sigma^\prime(bxb)$ for all $x \in A$. Denote by $B$ the semisimple Banach algebra $bAb$. Using Theorem~\ref{commute} it follows that $ab=bab\in B$, and from Lemmas~\ref{folklore} and ~\ref{center} we deduce that $ab$ commutes with every $c\in B$. Since $b$ is a projection in $A$ we have that $\sigma_B(ab)\subseteq \{0,1\}.$ Therefore $\sigma_B(ab(ab-b))=\{0\}$ from which
	$$\rho_B(ab(ab-b)c)\leq \rho_B(ab(ab-b))\rho_B(c)=0\mbox{ for each }c\in B.$$	Since $B$ is semisimple we conclude that $ab(ab-b)=0$, and hence that $ab$ is a projection. But the hypothesis also implies that $ \sigma(a(b-\mathbf 1)x) \subseteq \sigma(b(b-\mathbf 1)x)=\{0\} $ whence $a(b-\mathbf 1)=0$ by semisimplicity. Consequently $ab=a$ is a projection.
	
	(c) If $b\in G(A)$ then $\eqref{contain2}$ implies that $\sigma^\prime(ab^{-1}x)\subseteq\sigma^\prime(x)$ holds for all $x\in A$. It follows from part (b) that $ab^{-1}=p$ for some projection $p$ in $Z(A)$.
	
	(d) Observe that $\sigma_{aAa}^{\prime}(a(axa))\subseteq \sigma_{aAa}^{\prime}(aba(axa))$ holds in the semisimple Banach algebra $aAa$ which has identity element $a$. It follows from part (a) that $a=aba$, and hence that $ab=ba=a$. 
	
\end{proof}

The following is immediate from Theorem~\ref{group}:

\begin{corollary}\label{abinvert}
	Suppose, for some $a,b\in A$, that $\sigma^\prime(ax)\subseteq\sigma^\prime(bx)$ for all $x\in A$. If either $a$ or $b$ is invertible, or a projection, then $a$ is an algebraic truncation of $b$.  
\end{corollary}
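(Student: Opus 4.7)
The plan is to read off each case of Corollary~\ref{abinvert} directly from the corresponding part of Theorem~\ref{group} and verify the two defining identities $a(b-a)=0$ and $(b-a)a=0$ in each instance. So the work is almost entirely bookkeeping; there is no real obstacle beyond making sure all four cases are covered.

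First I would dispose of the trivial case: if $a\in G(A)$, then Theorem~\ref{group}(a) gives $a=b$, so $b-a=0$ and both products vanish. Next, if $b$ is a projection, Theorem~\ref{group}(b) yields $ab=ba=a$, from which $a(b-a)=ab-a^2=a-a=0$ (using that $a$ is also a projection, as the theorem asserts) and symmetrically $(b-a)a=0$. The case where $a$ itself is a projection is handled identically using Theorem~\ref{group}(d), which again gives $ab=ba=a$, and the same one-line computation applies since $a^2=a$.

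The remaining case is the only one requiring a brief calculation: if $b\in G(A)$, Theorem~\ref{group}(c) produces a central projection $p\in Z(A)$ with $a=bp$. Then $b-a=b-bp=b(\mathbf 1-p)=(\mathbf 1-p)b$ since $p$ is central, so
\begin{align*}
a(b-a) &= bp\cdot b(\mathbf 1-p)=b^2 p(\mathbf 1-p)=0,\\
(b-a)a &= b(\mathbf 1-p)\cdot bp=b^2(\mathbf 1-p)p=0,
\end{align*}
where centrality of $p$ is used to pull it through $b$ and $p(\mathbf 1-p)=0$ finishes the job. This completes all four cases, and since each was a direct deduction from Theorem~\ref{group}, no further subharmonic or spectral argument is needed at this stage; the corollary is genuinely a clean corollary.
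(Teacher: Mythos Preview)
Your proof is correct and follows exactly the route the paper intends: the paper's own proof is simply ``immediate from Theorem~\ref{group},'' and your case-by-case verification just spells out that immediacy. Each of the four deductions from parts (a)--(d) is accurate, including the use of centrality of $p$ in the $b\in G(A)$ case.
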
	

Theorem~\ref{scalar} settles the case, with respect to \eqref{contain2}, when $a$ and $b$ are linearly dependent. As a corollary we can then deduce a precise algebraic characterization of \eqref{contain2} for some important classes of Banach algebras.

\begin{theorem}\label{scalar}
	If $a=\alpha b$ for some $\alpha\in\mathbb C$, and $\sigma^\prime(ax)\subseteq\sigma^\prime(bx)$ for all $x\in A$, then $\alpha=0$ or $\alpha=1$. 
\end{theorem}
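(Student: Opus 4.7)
Assume $\alpha \neq 0$ (the case $\alpha = 0$ is immediate) and, without loss of generality, $b \neq 0$ (else $a = 0$ and we may take $\alpha = 0$); the goal is $\alpha = 1$. The hypothesis becomes $\alpha\sigma^\prime(bx) \subseteq \sigma^\prime(bx)$ for every $x \in A$, and \eqref{contain3} applied to the pair $((1-\alpha)b, b)$ gives the parallel inclusion $(1-\alpha)\sigma^\prime(bx) \subseteq \sigma^\prime(bx)$. By semisimplicity of $A$ there is some $x$ with $\rho(bx) > 0$; testing both inclusions at a spectral point of $bx$ of maximum modulus yields the preliminary estimates $|\alpha| \leq 1$ and $|1-\alpha| \leq 1$.

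The core plan is to construct $x_\star \in A$ such that $\sigma^\prime(bx_\star)$ is the singleton $\{1\}$. Once that is in hand, the hypothesis applied to $x_\star$ forces $\{\alpha\} = \alpha\sigma^\prime(bx_\star) = \sigma^\prime(ax_\star) \subseteq \sigma^\prime(bx_\star) = \{1\}$, so $\alpha = 1$. To construct $x_\star$, I would first select $x_0 \in A$ such that $y := bx_0$ admits an isolated spectral point $\lambda_0 \neq 0$ (this is the obstacle, see below); then choose a holomorphic function $f$ on a neighbourhood of $\sigma(y)$ with $f(\lambda_0) = 1/\lambda_0$ and $f \equiv 0$ on a neighbourhood of $\sigma(y)\setminus\{\lambda_0\}$, and set $x_\star := x_0\, f(y)$. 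Then $bx_\star = (bx_0)f(y) = y f(y)$ by associativity, and the spectral mapping theorem gives
\[
\sigma(bx_\star) \;=\; \{\mu f(\mu) : \mu \in \sigma(y)\} \;=\; \{0, 1\},
\]
so $\sigma^\prime(bx_\star) = \{1\}$ as required.

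The main obstacle is establishing the existence of such an $x_0$, that is, an $x_0 \in A$ for which $bx_0$ possesses an isolated nonzero spectral point. In algebras with non-trivial socle (for instance $\mathcal{L}(X)$) one may take $x_0$ to be a rank-one element, so that $\sigma(bx_0)$ is finite and the Riesz decomposition is immediate; but in a general semisimple Banach algebra one must argue from the fact that $b \neq 0$ lies outside the Jacobson radical, most naturally via an Aupetit-style perturbation or continuity-of-spectrum argument to guarantee that some $b x_0$ has a suitable spectral splitting. This is the delicate step of the proof, which I expect to be resolved by a careful subharmonic argument in the spirit of the proof of Theorem~\ref{commute}.
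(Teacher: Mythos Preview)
Your opening reductions (the estimates $|\alpha|\le 1$ and $|1-\alpha|\le 1$) match the paper exactly, but the core plan has a genuine gap that cannot be repaired along the lines you suggest. The obstacle you flag---producing $x_0$ with $bx_0$ possessing an isolated nonzero spectral value---is not merely delicate; it is in general impossible. Take $A=C[0,1]$ and $b(t)=t$. For any $x\in A$ the spectrum $\sigma(bx)$ is the range of the continuous map $t\mapsto t\,x(t)$ on the connected set $[0,1]$, hence is connected; since it always contains $0$, either $\sigma^\prime(bx)=\emptyset$ or $\sigma^\prime(bx)$ is an infinite connected set with no isolated points. Thus no $x_\star$ with $\sigma^\prime(bx_\star)=\{1\}$ exists for this $b$, and no subharmonic perturbation argument will manufacture one. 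Your scheme therefore cannot cover even the commutative prototype in Example~\ref{truncation}.

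The paper circumvents this by asking for much less than an isolated spectral point. It first iterates the inclusion to obtain $\alpha^n\sigma^\prime(bx)\subseteq\sigma^\prime(bx)$ and $(1-\alpha^n)\sigma^\prime(bx)\subseteq\sigma^\prime(bx)$ for all $n$, which forces $\alpha\in(0,1)$ and, more importantly, furnishes admissible scalars $\gamma\in(0,1)$ arbitrarily close to $1$. It then builds a single $x_0$ (via $bx_0=e^{b}-\mathbf 1$, after a harmless rescaling of $b$) whose nonzero spectrum has a one-sided real gap: a point $t_1<0$ in $\sigma^\prime(bx_0)$ with an open interval $(t_1,t_2)$ disjoint from $\sigma^\prime(bx_0)$. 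For $\gamma$ sufficiently close to $1$ one has $\gamma t_1\in(t_1,t_2)$, contradicting $\gamma\sigma^\prime(bx_0)\subseteq\sigma^\prime(bx_0)$. The point is that a \emph{gap} in the spectrum is achievable via the Spectral Mapping Theorem even when an \emph{isolated} point is not; your Riesz--projection idea demands the latter and so overshoots what the ambient algebra can provide.
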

\begin{proof}
	If $a$ or $b$ is invertible then $a=b$ or $a=0$, and the proof is complete; so we may assume $a,b\not\in G(A)$. If $\sigma(bx)=\{0\}$ for all $x\in G(A)$, then, by semisimplicity, $a=b=0$. We can therefore assume the existence of $x^\prime\in G(A)$ such that $\sigma(bx^\prime)\not=\{0\}$. If we can establish $ax^\prime=0$ or $ax^\prime=bx^\prime$, then $a=0$ or $a=b$. So we can assume, without loss of generality, that $\sigma(b)\not=\{0\}$. To obtain a contradiction we shall assume then that $\alpha\not=0$, $\alpha\not=1$. The first step is to show that $\alpha\in(0,1)\subset\mathbb R$: Via the spectral radius we obtain $|\alpha|\leq1$. But \eqref{contain3} implies that $1-\alpha$ is also a number satisfying 
	$$\sigma^\prime((1-\alpha)bx)\subseteq\sigma^\prime(bx)\mbox{ for all }x\in A$$ whence $|1-\alpha|\leq1$.  
	Observe next that if $\alpha$ is a number satisfying $\sigma^\prime(\alpha bx)\subseteq\sigma^\prime(bx)\mbox{ for all }x\in A,$ then so is the number $\alpha^n$ for any $n\in\mathbb N$. Arguing as before we therefore have 
	$$\sigma^\prime((\alpha^n bx)\subseteq\sigma^\prime(bx)\mbox{ and }\sigma^\prime((1-\alpha^n)bx)\subseteq\sigma^\prime(bx)\mbox{ for all }x\in A, n\in\mathbb N.$$ But if $\alpha\not\in(0,1)$ then, for some $n\in\mathbb N$, $\alpha^n$ would not be in the ``feasible region" (by rotation) i.e. $\alpha^n\notin \{\lambda\in\mathbb C:|\lambda|\leq1\}\cap \{\lambda\in\mathbb C:|1-\lambda|\leq1\}$. So we conclude that $\alpha\in(0,1)$. At this stage we need to make two further observations: Firstly, if $\alpha\in (0,1)$ and $\sigma^\prime(\alpha bx)\subseteq\sigma^\prime(bx)$ holds for all $x\in A$, then,  given any $\epsilon>0$, we can (by the preceding argument) find $\beta\in(0,1)$ such that $|\beta|<\epsilon$ and  $\sigma^\prime(\beta bx)\subseteq\sigma^\prime(bx)$. Consequently we can also find $\gamma\in(0,1)$ such that $|\gamma-1|<\epsilon$ and $\sigma^\prime(\gamma bx)\subseteq\sigma^\prime(bx)$ for all $x\in A$. Secondly, if 
	$\sigma^\prime(\alpha bx)\subseteq\sigma^\prime(bx)$ holds for all $x\in A$, and $\xi\in\mathbb C$ is arbitrary then
	$\sigma^\prime(\alpha (\xi b)x)\subseteq\sigma^\prime((\xi b)x)$ holds for all $x\in A$. Thus, by the second observation, we can assume without loss of generality that $\sigma^\prime(b)$ contains a complex number on a horizontal line $y=\pm(2k+1)\pi$ (for some $k\in\mathbb N$) in the complex plane. Now, if we take 
	$$x_0=\sum_{j=0}^\infty\frac{b^j}{(j+1)!}$$ then 
	$$e^b-1=\sum_{j=1}^\infty \frac{b^j}{j!}=bx_0.$$ Since $\sigma^\prime(b)$ is bounded and contains at least one complex number on the horizontal line $y=\pm(2k+1)\pi$ for some $k\in\mathbb N$, it follows, by the Spectral Mapping Theorem, that there exists an open interval $(t_1,t_2)\subset\mathbb R$ with $t_1<t_2<0$ such that $t_1\in\sigma^\prime(bx_0)$ and $(t_1,t_2)\cap\sigma^\prime(bx_0)=\emptyset$. But now, by the first observation, with $|\gamma-1|$ sufficiently small, we obtain a contradiction with $\sigma^\prime(\gamma bx_0)\subseteq\sigma^\prime(bx_0)$. Thus either $\alpha=0$ or $\alpha=1$. 
	
\end{proof}

For the last paragraph of this section we recall that an algebra $A$ is called \emph{prime} if $$axb=0\mbox{ for all }x\in A\Rightarrow a=0\mbox{ or } b=0.$$ Further, a prime Banach algebra $A$ is called \emph{centrally closed} if the \emph{extended centroid} (see Sections 7.4--7.6 in \cite{bresar2014} for the definition and properties) of $A$ is equal to the complex field.  We can now establish:

\begin{corollary}\label{LX}
	Let $A$ be a centrally closed semisimple prime Banach algebra, and let $a,b\in A$. Then $\sigma^\prime(ax)\subseteq\sigma^\prime(bx)$ for all $x\in A$ if and only if either  $a=0$ or $a=b$.  		
\end{corollary}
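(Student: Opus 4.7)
The reverse implication is trivial, so the plan is to prove the forward implication by chaining the earlier results of this section. The strategy is to extract the identity $axb=bxa$ (for every $x\in A$) from Corollary~\ref{cyc}, convert it into a linear dependence $a=\lambda b$ using the centrally closed prime hypothesis, and then pin the scalar $\lambda$ down via Theorem~\ref{scalar}.

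First I would dispose of the degenerate case $b=0$: the spectral containment then forces $\sigma(ax)\subseteq\{0\}$ for every $x\in A$, whence $\rho(ax)=0$ for all $x\in A$, and semisimplicity gives $a=0$. Assuming $b\neq 0$, Corollary~\ref{cyc} yields $axb=bxa$ for all $x\in A$. At this point I would invoke the standard property of the extended centroid $C(A)$ of a prime ring (see Bre\v{s}ar~\cite{bresar2014}, Sections~7.4--7.6): the identity $axb=bxa$ for all $x\in A$ forces $a$ and $b$ to be $C(A)$-dependent. Since $b\neq 0$, this means $a=\lambda b$ for some $\lambda\in C(A)$, and the centrally closed hypothesis says precisely that $C(A)=\mathbb C$, so $\lambda\in\mathbb C$. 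With linear dependence in hand, the hypothesis rewrites as $\sigma^\prime(\lambda bx)\subseteq\sigma^\prime(bx)$ for every $x\in A$, and Theorem~\ref{scalar} concludes $\lambda=0$ or $\lambda=1$, i.e.\ $a=0$ or $a=b$.

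The step requiring most care is the passage from $axb=bxa$ to the linear dependence $a=\lambda b$ with $\lambda$ a complex scalar. This is the one place where the two algebraic hypotheses genuinely enter: primeness is what makes the extended centroid a field in which such a ratio can even be formed, while centrally closed identifies this field with $\mathbb C$ and so ensures that $\lambda$ lands in the base field rather than in a proper extension. I would therefore be careful to cite this fact explicitly (rather than treat it as routine), and to note that without the centrally closed hypothesis one would only obtain $a=\lambda b$ for some $\lambda$ in $C(A)$, which is exactly the flavour of the weaker statements (c) and (d) in the introduction involving central projections in $A^{\prime\prime}$. The remainder of the argument is a straightforward chaining of Corollary~\ref{cyc} with Theorem~\ref{scalar}, requiring no further analytic input.
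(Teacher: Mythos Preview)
Your argument is correct and matches the paper's proof essentially line for line: invoke Corollary~\ref{cyc} to get $axb=bxa$, use the extended-centroid fact (the paper cites \cite[Lemma~7.41]{bresar2014}) together with central closedness to obtain $a=\lambda b$ with $\lambda\in\mathbb C$, and finish with Theorem~\ref{scalar}. Your explicit treatment of the degenerate case $b=0$ and your commentary on where primeness versus central closedness enter are welcome elaborations, but the underlying strategy is identical.
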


\begin{proof}
	Notice that the condition $axb=bxa$ for all $x\in A$ (Corollary~\ref{cyc}) implies that $a$ and $b$ are linearly dependent over the extended centroid \cite[Lemma 7.41]{bresar2014}. The forward implication is then clear from Theorem~\ref{scalar}. The reverse implication is trivial.	
\end{proof}

Corollary~\ref{LX} covers some important classes of Banach algebras: Examples include prime $C^\star$-algebras (see for instance \cite[Proposition 2.2.10]{localmultipliers} as well as primitive Banach algebras (look at \cite[Corollary 4.1.2]{ringsgenid}); specifically,  Corollary~\ref{LX} holds for $A=\mathcal L(X)$. 
Finally we may observe that semisimple prime algebras can be characterized in terms of the spectral containment \eqref{contain2}: 

\begin{proposition}\label{prime}
	$A$ is prime if and only if for $a\not=0,\ a\not=b$
	\begin{equation}\label{primecond}
	\sigma^\prime(ax)\subseteq\sigma^\prime(bx)\ \forall\ x\in A\  \Rightarrow aA\cap (b-a)A\not=\{0\}.
	\end{equation}
\end{proposition}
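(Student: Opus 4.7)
For the ``only if'' direction, I would assume $A$ is prime, take $a\neq 0$, $a\neq b$, and suppose $\sigma'(ax)\subseteq\sigma'(bx)$ holds for every $x\in A$. Corollary~\ref{cyc} yields $axb=bxa$ for all $x$, which rearranges to
$$ax(b-a) \;=\; axb - axa \;=\; bxa - axa \;=\; (b-a)xa.$$
In particular every element of the form $ax(b-a)$ lies in $aA\cap(b-a)A$. If the intersection were trivial, this would force $aA(b-a)=\{0\}$, and primeness together with $a\neq 0$ would give $b-a=0$, contradicting $a\neq b$.

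For the converse, I would argue by contrapositive: starting from the failure of primeness, produce a concrete pair witnessing the failure of the implication. Non-primeness gives $s,t\in A\setminus\{0\}$ with $sAt=\{0\}$. Set $a=s$ and $b=s+t$, so $a\neq 0$, $a\neq b$, and $b-a=t$. Then $ax(b-a)=sxt=0$ for every $x\in A$, and Proposition~\ref{reverse} immediately supplies the spectral containment $\sigma'(ax)\subseteq\sigma'(bx)$ for all $x\in A$. It remains to verify $aA\cap(b-a)A=sA\cap tA=\{0\}$. Given $w=sx_1=tx_2$ in the intersection, for each $y\in A$,
$$wyw \;=\; (sx_1)\,y\,(tx_2) \;=\; s(x_1yt)x_2 \;=\; 0,$$
since $s\cdot(x_1y)\cdot t\in sAt=\{0\}$. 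Hence $wAw=\{0\}$.

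To finish I would invoke the standard semisimplicity consequence: $wAw=\{0\}$ gives $(yw)^2=y(wyw)=0$ for every $y\in A$, so $\rho(yw)=0$ throughout $A$, placing $w$ in the Jacobson radical of $A$; semisimplicity then forces $w=0$. Thus $sA\cap tA=\{0\}$, which yields the required counterexample and completes the contrapositive. I do not expect a serious obstacle here; the argument is essentially a neat pairing of Corollary~\ref{cyc} (for the forward direction) with Proposition~\ref{reverse} (for the reverse). The only delicate point is the verification $sA\cap tA=\{0\}$, which relies on the semisimplicity trick above.
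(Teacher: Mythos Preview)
Your argument is correct and follows essentially the same route as the paper's proof: the forward direction uses $axb=bxa$ (the paper cites Theorem~\ref{commute}, you cite its Corollary~\ref{cyc}) to place $ax(b-a)$ in $aA\cap(b-a)A$ and then invokes primeness, while the converse builds the counterexample $a=s$, $b=s+t$ from an annihilating pair $sAt=\{0\}$ and shows $sA\cap tA=\{0\}$ via the semisimplicity trick $wAw=\{0\}\Rightarrow w=0$. The only cosmetic differences are that you delegate the spectral containment in the converse to Proposition~\ref{reverse} (the paper redoes that computation in place) and that you phrase the radical argument via $(yw)^2=0$ directly rather than through Jacobson's Lemma and the Spectral Mapping Theorem.
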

\begin{proof}
	Suppose $A$ is prime, and there exist $a\not=0$, $b\not=a$ in $A$ satisfying 
	$\sigma^\prime(ax)\subseteq\sigma^\prime(bx)$ for all $x\in A$ with $aA\cap (b-a)A=\{0\}$. By Theorem~\ref{commute} we then have $(b-a)xa=ax(b-a)$ for all $x\in A$, from which it follows that $ax(b-a)=0$ for all $x\in A$. But, since $A$ is prime, this gives a contradiction. For the converse, suppose $A$ is not prime. Then, since $A$ is semisimple, we can find $a,c\in A$, both nonzero, and $a\not=c$, such that $axc=0$ for all $x\in A$. It then follows from Jacobson's Lemma, together with semisimplicity of $A$, that $cxa=0$ for all $x\in A$. Now set $b=c+a$. Then 
	$$\sigma^\prime(ax)\subseteq\sigma^\prime(cx)\cup\sigma^\prime(ax)=\sigma^\prime(cx+ax)=\sigma^\prime(bx)$$ holds for each $x\in A$. Suppose
	$aA\cap (b-a)A\not=\{0\}$. Then we find can $x,y\in A$ such that $ax=cy\not=0$. Orthogonality implies that, for each $z\in A$, we have $\sigma(axzax)=\{0\}$ from which Jacobson's Lemma, semisimplicity of $A$, and the Spectral Mapping Theorem yield the contradiction that $ax=0$.  So we conclude that if $A$ is not prime then \eqref{primecond} does not hold which completes the proof.
\end{proof}

Intuitively, the impression is that Proposition~\ref{prime} seems a bit artificial; the natural conjecture here should be that a semisimple Banach algebra $A$ is prime if and only if 
$$\sigma^\prime(ax)\subseteq\sigma^\prime(bx)\ \forall\ x\in A\  \Rightarrow a=0\mbox{ or }a=b.$$
However, a complete proof of the preceding statement eludes us at this stage, thus leaving the problem as a conjecture.

\section{$C^\star$-algebras}

As an application of the results in the preceding section we consider the case where $A$ is a $C^\star$-algebra. We should point out that Theorem~\ref{Cstar} can also very easily be obtained as a corollary of \cite[Theorem 2.3]{specproperties2}, and it is therefore not really new; the main difference here lies in the arguments leading to the respective results. We first establish the result for the commutative case:

\begin{theorem}\label{C(X)}
	Let $a,b\in A=C(X)$ where $X$ is any compact Hausdorff space. Then $\sigma^\prime(ax)\subseteq\sigma^\prime(bx)\mbox{ for all }x\in A$,  if and only if $a$ is an algebraic truncation of $b$. 
\end{theorem}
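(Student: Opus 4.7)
The reverse implication is immediate from Proposition~\ref{reverse}: in the commutative algebra $C(X)$ the condition $a(b-a)=0$ is equivalent to $ax(b-a)=0$ for every $x$, and the spectral containment follows at once.

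For the forward implication, suppose $\sigma'(ax)\subseteq\sigma'(bx)$ for every $x\in C(X)$; the task is to show that at every $t_0\in X$, either $a(t_0)=0$ or $a(t_0)=b(t_0)$. Fix such a $t_0$ with $\alpha:=a(t_0)\neq 0$ and write $\beta:=b(t_0)$; the target is $\alpha=\beta$. The first two steps use real-valued Urysohn bumps. \emph{Step 1}: if $\beta=0$, pick $\phi:X\to[0,1]$ with $\phi(t_0)=1$, supported in a small neighborhood $U$ on which $|b|<|\alpha|/2$; then $\alpha=(a\phi)(t_0)\in\sigma'(a\phi)\subseteq\sigma'(b\phi)$ contradicts $\sup|b\phi|<|\alpha|/2$. \emph{Step 2}: test with $x=\phi/\alpha$; the containment $1=(ax)(t_0)\in\sigma'(bx)$ produces some $s$ in the support of $\phi$ with $b(s)\phi(s)=\alpha$, so $\alpha/b(s)=\phi(s)\in(0,1]\cap\mathbb{R}$. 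Taking a net of shrinking neighborhoods of $t_0$ and using continuity of $b$, the ratio $\alpha/\beta$ lies in $(0,1]\cap\mathbb{R}$.

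The main obstacle is \emph{Step 3}, excluding $\tau:=\alpha/\beta\in(0,1)$. Real-valued bumps are consistent with any $\tau\in(0,1]$, so I switch to a \emph{complex-valued} test function. Let $f:[0,1]\to\mathbb{C}$ be $f(s)=(1-s)e^{i\pi s}$, a curve from $1$ to $0$ lying in the closed upper half-disk that avoids the real interval $(0,1)$; for $\tau\in(0,1)$ one has $d:=\min_{s\in[0,1]}|f(s)-\tau|>0$. Choose a Urysohn function $\psi:X\to[0,1]$ with $\psi(t_0)=0$ and $\psi\equiv 1$ off a neighborhood $U$ chosen small enough that $\sup_U|b-\beta|<|\beta|d$, and set $x=f\circ\psi$. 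Then $\alpha=(ax)(t_0)\in\sigma'(ax)\subseteq\sigma'(bx)$ by hypothesis, but a reverse-triangle estimate using $|f|\leq 1$ yields
\[
|b(t)f(\psi(t))-\alpha|\geq|\beta|\,|f(\psi(t))-\tau|-|b(t)-\beta|\geq|\beta|d-\sup_U|b-\beta|>0
\]
for every $t\in U$, while $(bx)(t)=0$ off $U$. Hence $\alpha\notin\sigma(bx)$, a contradiction forcing $\tau=1$ and thus $a(t_0)=b(t_0)$.
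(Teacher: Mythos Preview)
Your argument is correct and takes a genuinely different route from the paper's. The paper works globally: it first uses the chain $\sigma'(aa^\star x)\subseteq\sigma'(ab^\star x)\subseteq\sigma'(bb^\star x)$ to get $\sigma(ab^\star)\subset\mathbb R^+$, hence $a(t)=\alpha(t)b(t)$ with $\alpha(t)>0$ wherever neither function vanishes; then the algebraic test $x=b^\star(\mathbf 1+ibb^\star)$ lands the range of $bx$ on the parabola $\Ima z=(\Rea z)^2$, and comparing with the range of $ax$ forces $\alpha\equiv 1$; finally $a(b-a)=0$ is extracted only after invoking Corollary~\ref{abinvert} and the external uniqueness theorem \cite[Theorem~2.6]{univari}. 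Your approach is instead local and self-contained: fix a point, use real Urysohn bumps to read off $\tau=a(t_0)/b(t_0)\in(0,1]$, then rule out $\tau\in(0,1)$ with the complex curve $s\mapsto(1-s)e^{i\pi s}$, which links $1$ to $0$ in the closed upper half-disk while avoiding the open interval $(0,1)$. The gain is that your proof needs nothing beyond Urysohn's lemma and Proposition~\ref{reverse}---no appeal to Theorem~\ref{group} or to \cite{univari}; the paper's version, by contrast, avoids pointwise limiting arguments by exploiting the $\star$-structure to manufacture explicit global test functions.
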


\begin{proof}
	The reverse implication follows directly from Proposition~\ref{reverse}. We prove the forward implication: If either $a$ or $b$ belongs to $G(A)$ then the result follows from Corollary~\ref{abinvert}. So we assume neither is invertible. For $x\in A$ denote by $x^\star \in A$ the function $x^\star(t)=\overline{x(t)}$. Using 
	\eqref{contain2} we obtain
	$$\sigma^\prime(aa^\star x)\subseteq\sigma^\prime(ab^\star x)\subseteq\sigma^\prime(bb^\star x)\mbox{ for all }x\in A.$$
	In particular $\sigma(ab^\star )\subset\mathbb R^+$ which means that, for each $t\in X$, $a(t)\overline{b(t)}\in\mathbb R^+$. This shows that $ab^\star =a^\star b$. For each $x\in A$ define $K_x=\{t\in X:x(t)=0\}$. We can write
	\begin{equation}
	x(t)=\left\{\begin{array}{cc} r_x(t)\left[\cos\theta_x(t)+i\sin\theta_x(t)\right], & x(t)\not=0\\
	0, & x(t)=0.\end{array}\right.
	\end{equation}
	where $r_x$ and $\theta_x$ are functions: 
	$$r_x:K_x\rightarrow \mathbb R^+\mbox { and }\theta_x:K_x\rightarrow (-\pi,\pi]$$ 
	Suppose now for some $t\in X$, $a(t)\not=0$ and $b(t)\not=0$. Then 
	$$a(t)b^\star (t)=r_a(t)r_b(t)[\left[\cos(\theta_a(t)-\theta_b(t))+i\sin(\theta_a(t)-\theta_b(t))\right]\in\mathbb R^+$$ implies that $\sin(\theta_a(t)-\theta_b(t))=0$, from which we deduce
	(i) $\theta_a(t)-\theta_b(t)=0$ or (ii) $\theta_a(t)-\theta_b(t)=\pm\pi$. If (ii) holds then we have a contradiction with $\sigma(ab^\star )\subset\mathbb R^+$ and so  $\theta_a(t)=\theta_b(t)$. We can hence conclude from this that for each $t\in X$ such that $a(t)\not=0$ and $b(t)\not=0$ there exists a corresponding positive real number, say $\alpha(t)$, such that $a(t)=\alpha(t)b(t)$. Formally we have the following:
	If $K=K_a\cup K_b$, and if $a$ and $b$ satisfy \eqref{contain2}, then there exists a continuous function $\alpha: X\backslash K\rightarrow (0,\infty)$ such that $$a(t)=\alpha(t)b(t)\mbox{ for all }t\in X\backslash K.$$
	We now show that the function $\alpha(t)\equiv1$ for all $t\in X\backslash K$: By $\eqref{contain2}$ we have that $\sigma^\prime\left(ab^\star +iab^\star bb^\star \right)\subseteq  \sigma^\prime(bb^\star +i(bb^\star )^2)$. Observe that the set on the right side is contained in the parabola $\Ima(z)=[\Rea(z)]^2$ in the first quadrant of the complex plane.
	Using the fact that $a(t)=\alpha(t)b(t)$ for all $t\in X\backslash K$ where $\alpha(t)\in(0,\infty)$ yields the containment
	$$\{\alpha(t)bb^\star (t)+i\alpha(t)(bb^\star (t))^2:t\in X\backslash K\}\subseteq \{bb^\star (t)+i(bb^\star (t))^2:t\in X\backslash K\}.$$  So if $t_0$ belongs to the set on the left then we must necessarily have the relation $$[\alpha(t_0)bb^\star (t_0)]^2=\alpha(t_0)(bb^\star (t_0))^2$$ 
	which forces $\alpha(t_0)=1$ hence proving our claim. We are now in a position to show that $a(b-a)=0$. Pick $x\in A$ arbitrary. If $t\in X\backslash K$ then we have that $(abx)(t)=(a^2x)(t)$; if $t\in K_a$ then $(abx)(t)=0=(a^2x)(t)$; if $t\in K_b$ then  $(abx)(t)=0$ but we have no information about $(a^2x)(t)$. However, this argument suffices to conclude that $\sigma(abx)\subseteq\sigma(a^2x)$ (for any $x\in A$). On the other hand \eqref{contain2} says $\sigma^\prime(a^2x)\subseteq\sigma^\prime(abx)$ for an arbitrary $x$. Since $A$ is commutative and $a\notin G(A)$ we actually have $\sigma(a^2x)\subseteq\sigma(abx)$ for all $x\in A$. By a result of Braatvedt and Brits \cite[Theorem 2.6]{univari} it follows that $(b-a)a=0$. 
\end{proof}

\begin{theorem}\label{Cstar}
	Let $A$ be any unital $C^\star$-algebra. If $a,b\in A$ satisfy $\sigma^\prime(ax)\subseteq\sigma^\prime(bx)\mbox{ for all }x\in A$, then $a$ is an algebraic truncation of $b$. More generally, \eqref{contain2} is equivalent to the condition that $ax$ is an algebraic truncation of $bx$ for all $x\in A$.
\end{theorem}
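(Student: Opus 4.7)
The plan is to reduce to the commutative case (Theorem~\ref{C(X)}) by passing to the positive elements $a^*a$, $aa^*$, $b^*b$, $bb^*$, and then to bootstrap to $a(b-a)=0$ through the $C^\star$-identity. First, by Theorem~\ref{commute}, $ab = ba$; and taking adjoints throughout \eqref{contain2} (using $\sigma(y)=\overline{\sigma(y^*)}$ together with Jacobson's Lemma) one obtains the dual containment $\sigma^\prime(a^*x)\subseteq\sigma^\prime(b^*x)$ for all $x\in A$. Applying Corollary~\ref{cyc} with $x=a^*$ and $x=b^*$ (and taking adjoints) produces the cross-relations $aa^*b = ba^*a$ and $ab^*b = bb^*a$, together with $a^*ab^*=b^*aa^*$ and $b^*ba^*=a^*bb^*$. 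A brief manipulation of these relations then yields that $aa^*$ commutes with $bb^*$ and $a^*a$ commutes with $b^*b$. Alternating the two containments through Jacobson's Lemma shows further that the pairs $(aa^*,bb^*)$ and $(a^*a,b^*b)$ themselves satisfy \eqref{contain2}; applying Theorem~\ref{C(X)} inside the corresponding commutative $C^\star$-subalgebras then delivers $aa^*bb^* = (aa^*)^2$ and $a^*a\,b^*b = (a^*a)^2$.

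The crucial step will be to show $ab^* = aa^*$. From the hypothesis, $\sigma^\prime(ab^*)\subseteq\sigma^\prime(bb^*)\subseteq[0,\infty)$, and the cross-relations above yield by direct calculation $(ab^*)(ba^*)=(ba^*)(ab^*)=aa^*bb^*$, so $ab^*$ commutes with its adjoint $ba^*$. Being normal with non-negative spectrum, $ab^*$ is positive self-adjoint; hence $ab^*=ba^*$ and $(ab^*)^2=(ab^*)(ba^*)=aa^*bb^*=(aa^*)^2$, so uniqueness of the positive square root forces $ab^* = aa^*$. The identical argument applied to the dual pair $(a^*,b^*)$ yields $a^*b = a^*a$.

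The conclusion is then immediate: $a^*b = a^*a$ reads $a^*(b-a)=0$, so right-multiplying by $a$ and using $(b-a)a = a(b-a)$ (a consequence of $ab=ba$) gives $a^*a(b-a)=0$. Setting $c:=a(b-a)$, the $C^\star$-identity yields $\|c\|^2 = \|c^*c\| = \|(b-a)^*[a^*a(b-a)]\| = 0$, whence $c=0$; thus $a(b-a)=0$ and $(b-a)a=a(b-a)=0$. The more general assertion follows by applying the same argument to the pair $(ax,bx)$, which inherits \eqref{contain2} for every $x\in A$. I expect the main obstacle to be the middle paragraph — normality and positivity of $ab^*$ followed by a positive square root extraction — because it relies on the full strength of both commutative reductions; and the final bridge succeeds only because $a^*b = a^*a$ (rather than the naively expected $ab = a^2$) can be combined with $ab = ba$ to annihilate $a(b-a)$.
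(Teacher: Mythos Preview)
Your argument is correct and follows the same overall strategy as the paper: pass from $(a,b)$ to self-adjoint combinations, invoke the commutative case (Theorem~\ref{C(X)}), deduce $ab^*=aa^*$, and finish via the $C^\star$-identity. The execution differs in two pleasant ways. First, the paper proves a separate ``normal case'' and then applies it to the pair $(aa^*,ba^*)$, whereas you skip that detour and apply Theorem~\ref{C(X)} directly to $(aa^*,bb^*)$ and $(a^*a,b^*b)$ inside their commutative $C^\star$-subalgebras. Second, to extract $ab^*=aa^*$ the paper passes to characters on the algebra generated by $\{\mathbf 1,ba^*,aa^*,bb^*\}$ and solves the resulting scalar equations, while you observe that $ab^*$ is normal with nonnegative spectrum, hence positive, and then invoke uniqueness of the positive square root of $(aa^*)^2$. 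Your route is slightly more economical; the paper's character argument has the advantage of making the case split $\chi(aa^*)=0$ versus $\chi(aa^*)\neq 0$ fully explicit. The final norm computations are equivalent: the paper uses $ab^*-aa^*=0$ inside $\|a(b-a)(b-a)^*a^*\|$, and you use $a^*(b-a)=0$ to kill $c^*c$ for $c=a(b-a)$.
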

\begin{proof}
	We shall first establish the result under the assumption that $a$ and $b$ are normal: From Theorem~\ref{commute} we have that $a$ commutes with $b$ and $b^\star $. Observe further that \eqref{contain2} implies  $\sigma^\prime(a^\star x)\subseteq\sigma^\prime(b^\star x)\mbox{ for all }x\in A,$ and so, arguing as before, we have that $a^\star $ commutes with $b$ and $b^\star $. Let $B$ be the Banach algebra generated by the set $\{\mathbf 1, a, a^\star , b, b^\star \}$. Then
	$$\sigma_B(ax)=\sigma_A(ax)\subseteq\sigma_A(bx)=\sigma_B(bx)\mbox{ for each }x\in B.$$ Since $B$ is a commutative $C^\star $-algebra, it follows from  the Gelfand-Naimark Theorem together with Theorem~\ref{C(X)}, that $a(b-a)=(b-a)a=0$.\
	Suppose next that $a,b$ are arbitrary elements of $A$ satisfying \eqref{contain}. Then it follows that
	\begin{equation}\label{invol}
	\sigma^\prime(xa^\star )\subseteq\sigma^\prime(xb^\star )\ \forall\ x\in A.
	\end{equation}
	By Corollary~\eqref{cyc} $a^\star xb^\star =b^\star xa^\star $ for all $x\in A$. Observe then that 
	$$(ba^\star )(ba^\star )^\star =ba^\star ab^\star =ab^\star ba^\star =(ba^\star )^\star (ba^\star )$$ and hence $ba^\star $ is normal (if we then notice that \eqref{invol} implies that $\sigma(ba^\star )$ is on the real line we may deduce that $ba^\star $ is self-adjoint, and in fact  $ba^\star \geq0$). Then
	\begin{equation*}
	\sigma^\prime(xaa^\star )=\sigma^\prime(a^\star xa)\subseteq\sigma^\prime(b^\star xa)=\sigma^\prime(xab^\star )=
	\sigma^\prime(xba^\star )\subseteq\sigma^\prime(xbb^\star )
	\end{equation*}
	holds for all $x\in A$. Applying the result obtained in the first part of the proof to
	the self-adjoint elements $aa^\star $ and $ba^\star $ we conclude that $(aa^\star )(ba^\star )=(aa^\star )^2$.  Observe also that $(ba^\star )^2=ba^\star ab^\star =aa^\star bb^\star $. Now take the Banach algebra, say $B$, generated by the self-adjoint, mutually commuting, collection $\{\mathbf 1, ba^\star , aa^\star , bb^\star \}$. Again by the Gelfand-Naimark Theorem we may view $B$ as a $C(K)$ for some compact Hausdorff space $K$, whence $B$ is semisimple. If $\chi$ is any character of $B$ then, since
	$$(aa^\star )(ba^\star )=(aa^\star )^2\mbox{ and }(ba^\star )^2=(aa^\star )(bb^\star ),$$ it follows that
	$\chi(aa^\star )=\chi(ba^\star )$ and, by semisimplicity of $B$, we conclude that  $aa^\star =ba^\star $. To complete the proof: 
	\begin{align*}
	\|a(b-a)\|^2&=\|[a(b-a)][a(b-a)]^\star \|=
	\|a(b-a)(b-a)^\star a^\star \|\\&=
	\|(b-a)a(b-a)^\star a^\star \|=
	\|(b-a)(ab^\star -aa^\star )a^\star \|\\&=
	\|(b-a)(ba^\star -aa^\star )a^\star \|=0.
	\end{align*}
\end{proof}

\bibliographystyle{amsplain}
 \bibliography{Spectral}

\providecommand{\bysame}{\leavevmode\hbox to3em{\hrulefill}\thinspace}
\providecommand{\MR}{\relax\ifhmode\unskip\space\fi MR }
\providecommand{\MRhref}[2]{%
  \href{http://www.ams.org/mathscinet-getitem?mr=#1}{#2}
}
\providecommand{\href}[2]{#2}
\begin{thebibliography}{1}

\bibitem{specproperties2}
J.~Alaminos, M.~Bre\v{s}ar, J.~Extremera, \v{S}. \v{S}penko, and A.~Villena,
  \emph{Determining {E}lements in ${C}^*$-{A}lgebras through {S}pectral
  {P}roperties}, J. {M}ath. {A}nal. {A}ppl. \textbf{405} (2013), 214--219.

\bibitem{localmultipliers}
P.~Ara and M.~Mathieu, \emph{Local {M}ultipliers on ${C}^\star$-{A}lgebras},
  Springer Monographs in Mathematics, Springer, 2003.

\bibitem{aupetit1991primer}
B.~Aupetit, \emph{A {P}rimer on {S}pectral {T}heory}, Universitext (1979),
  Springer-Verlag, 1991.

\bibitem{ringsgenid}
K.I. Beidar, W.S. Martindale, and A.V. Mikhalev, \emph{Rings with {G}eneralized
  {I}dentities}, Monographs and Textbooks in Pure and Applied Mathematics,
  Marcel Dekker Inc., 1996.

\bibitem{univari}
G.~Braatvedt and R.~Brits, \emph{Uniqueness and {S}pectral {V}ariation in
  {B}anach {A}lgebras}, Quaest. Math. \textbf{36} (2013), 155--165.

\bibitem{bresar2014}
M.~Bre\v{s}ar, \emph{Introduction to {N}oncommutative {A}lgebra}, Universitext,
  Springer-Verlag, 2014.

\bibitem{specproperties}
M.~Bre\v{s}ar and \v{S}. \v{S}penko, \emph{{D}etermining {E}lements in {B}anach
  {A}lgebras through {S}pectral {P}roperties}, J. {M}ath. {A}nal. {A}ppl.
  \textbf{393} (2012), 144--150.

\bibitem{univarisoc}
R.~Brits and F.~Schulz, \emph{Uniqueness and {S}pectral {V}ariation in the
  {S}ocle of a {B}anach {A}lgebra}, Submitted.

\bibitem{ptakderivations}
V.~Ptak, \emph{Derivations, {C}ommutators and the {R}adical}, Manuscripta
  {M}ath. \textbf{23} (1978), 355--362.

\end{thebibliography}

\end{document}